\newtheorem{thm}{Theorem}
\newtheorem{lem}{Lemma}
\newcommand{\R}{\mathbb{R}}
\newcommand{\N}{\mathbb{N}}
\newcommand{\Sym}{\mathbf{S}}
\newcommand{\Id}{\mathrm{Id}}
\begin{document}
\title{Enumerating ODE Equivalent Homogeneous Networks}
\author{A. J. Windsor}
\address{Department of Mathematical Sciences, University of Memphis,
  Memphis, TN 38152-3240, U.S.A. }
\email{awindsor@memphis.edu}
\subjclass[2010]{34C15 , 34A34}
\keywords{coupled cell system, coupled cell networks, coupled oscillators}

\begin{abstract}
  We give an alternative criterion for ODE equivalence in identical
  edge homogeneous coupled cell networks. This allows us to give a
  simple proof of Theorem 10.3 of Aquiar and Dias, which characterizes
  minimal identical edge homogeneous coupled cell networks. Using our
  criterion we give a formula for counting homogeneous coupled cell
  networks up to ODE equivalence. Our criterion is purely graph
  theoretic and makes no explicit use of linear algebra.
\end{abstract}

\maketitle

\section{Introduction}

Coupled cell networks are used to represent systems of coupled
dynamical systems schematically. Such systems appear either in various
biological systems. Networks of eight coupled cells modeling central
pattern generators in quadrupeds can be used to recover the primary
animal gaits \cite{Gaits2,Gaits1}. One of the important conclusions of
the theory of coupled cell systems is that the network itself imposes
constraints on the possible behaviors of the system even when we lack
detailed knowledge of the behavior of the cells within the network. A
recent application to head movement that illustrates the importance of
this is \cite{LieJune}.  For further applications see
\cite{StewartNature}.

Mathematically coupled cell networks are a subclass of vertex and edge
labeled directed multigraphs with loops.  Vertices with the same label
represent multiple copies of the same dynamical system. Edge labels
represent the type of coupling. A compatibility condition is imposed
that requires every vertex with a given label to receive the same set
of coupling types as inputs. As with any class of graphs there is a
natural notion of \emph{isomorphic} coupled cell networks induced by
bijections between the sets of vertices. This representation of
coupled cell networks follows \cite{Synchrony}. An alternative
approach is outlined in \cite{Field}.

Following Stewart and Golubitsky one may associate to each coupled
cell network a class of ordinary differential equations that are
compatible with the network structure, the class of coupled cell
systems associated to a coupled cell network. As was pointed out in
\cite{Synchrony} it is possible for
non-isomorphic coupled cell networks to have the same class of coupled
cell systems. In this case we term the two coupled cell networks
O.D.E. equivalent. We will give a full description of the coupled cell
systems associated to a coupled cell network for the simple case of
identical edge homogeneous coupled cell networks. For the definition
in the general case see \cite{DiasLinear}. 

Aguiar and Dias \cite{Minimal} examine the structure of O.D.E. equivalence
classes for such coupled cell networks. They find a collection of
\emph{canonical normal forms} - a collection of networks whose number
of edges is minimal within the equivalence class. This they term the
\emph{minimal} subclass. 

In this paper we consider the simplest type of coupled cell networks,
the identical edge homogeneous coupled cell networks. These are simply
directed multigraphs with loops where every vertex has the same
indegree. Aldosray and Stewart gave an enumeration of these networks
\cite{StewartCounting} counted up to isomorphism.

Using a simpler method specific to the case of homogeneous networks we
recover Theorem 10.3 of \cite{Minimal} which characterizes the minimal
subclass in this case. Furthermore, we are able to give a recursive
formula for enumerating the minimal systems with a given number of
vertices and edges.

\section{Coupled Cell Systems, Coupled Cell Networks, and
  O.D.E. Equivalence.} 

We will deal exclusively with identical edge homogeneous coupled cell
networks, hereafter referred to simply as networks.  

Mathematically such a network is a directed multigraph where loops are
allowed and where every vertex has the same in-degree. If the constant
in-degree is $r$ we will call the network degree $r$.  A directed
multigraph consists of a set of vertices $V$ and a multiset of edges
$E$ with elements in $V \times V$. A multiset may be thought of as a
function $E: V \times V \rightarrow \N$; we call this function the
\emph{edge multiplicity function}. The condition that every vertex has
the same in-degree, $r$, is then $v \in V$ $\sum_{u \in V} E(u,v) =
r$.

Given an $n$ cell degree $r$ network $G=(V,E)$, a choice of finite
dimensional phase space $P=\R^d$, and a function $F: P \times P^r
\rightarrow P$ such that $F(x_1; y_1,\dots y_r)$ is invariant under
all permutations of the variables $y_1, \cdots, y_d$, we may produce a
vector field on $P^n$. The vector field for the variable $x_i$
associated to cell $i$ is
\begin{equation*}
  \dot{x}_i = F(x_i; x_{j^{(i)}_1},\dots x_{j^{(i)}_r})
\end{equation*}
where $j^{(i)}_1, \dots, j^{(i)}_r$ are the source cells for the $r$
arcs that terminate at vertex $i$.  The complete system is
\begin{eqnarray}
    \dot{x}_1 &=& F(x_1; x_{j^{(1)}_1},\dots x_{j^{(1)}_{r\phantom{1}}}) \nonumber\\
  &\vdots& \nonumber\\
  \dot{x}_n &=& F(x_n; x_{j^{(n)}_1},\dots x_{j^{(n)}_{r\phantom{1}}}) \nonumber
\end{eqnarray}
The set of such vector fields is a subset of the vector fields on
$P^n$.  

A vector field obtained from a coupled cell network $G$ by a choice of
phase space and function $F$ is referred to as a \emph{coupled cell
  system}, or an \emph{admissible vector field}, associated to $G$.
We may consider the class of all admissible vector fields for a given
network $G$ and phase space $P$.  We will denote this class of vector
fields by $\mathfrak{X}_G^P$.

\textbf{Definition:} Two coupled cell networks $G_1$ and $G_2$ are
called \emph{O.D.E. equivalent} if there exists a network $G'_2$
isomorphic to $G_2$ such that for all choices of phase space $P$
\begin{equation*}
  \mathfrak{X}_{G_1}^P = \mathfrak{X}_{G_2'}^P.
\end{equation*}

More prosaically, an $n$ cell degree $r_1$ network $G_1$ and an $n$
cell degree $r_2$ network $G_2$ are called \emph{O.D.E. equivalent} if
there exists a network $G'_2$ isomorphic to $G_2$ such that
\begin{enumerate}
\item for all choices of phase space $P$ and function $F_1: P \times
  P^{r_1} \rightarrow P$ there exists a function $F_2: P \times
  P^{r_2} \rightarrow P$ such that for all vertices $i$
  \begin{equation*}
    F_1(x_i; x_{j^{(i)}_1},\dots x_{j^{(i)}_{r_1\phantom{1}}}) =  F_2(x_i; x_{k^{(i)}_1},\dots x_{k^{(i)}_{r_2\phantom{1}}})   
  \end{equation*}
  where $j^{(i)}_1, \dots, j^{(i)}_{r_1}$ are the source cells for the
  $r_1$ arcs that terminate at cell $i$ in network $G_1$ and
  $k^{(i)}_1, \dots, k^{(i)}_{r_2}$ are the source cells for the $r_2$
  arcs that terminate at cell $i$ in network $G_2'$.

\item for all choices of phase space $P$ and function $F_2: P \times
  P^{r_2} \rightarrow P$ there exists a function $F_1: P \times
  P^{r_1} \rightarrow P$ such that for all vertices $i$
  \begin{equation*}
    F_1(x_i; x_{j^{(i)}_1},\dots x_{j^{(i)}_{r_1\phantom{1}}}) =  F_2(x_i; x_{k^{(i)}_1},\dots x_{k^{(i)}_{r_2\phantom{1}}})   
  \end{equation*}
  where $j^{(i)}_1, \dots, j^{(i)}_{r_1}$ are the source vertices for
the $r_1$ arcs that terminate at vertex $i$ in network $G_1$ and
$k^{(i)}_1, \dots, k^{(i)}_{r_2}$ are the source vertices for the $r_2$
arcs that terminate at vertex $i$ in network $G_2'$.

\end{enumerate}

If we  consider $P=\R$ and linear
functions $F_1$ and $F_2$  then we obtain the notion of \emph{linear
  equivalence}. It is shown in \cite{DiasLinear} that linear equivalence and
O.D.E. equivalence are equivalent.

\section{Network Operations that Preserve O.D.E. equivalence}
\label{sec:netw-oper-that}

In this section we introduce two operations that can be performed on a
network that preserve the O.D.E. equivalence class. Since we are
dealing exclusively with homogeneous networks these operations are a
small part of the network operations considered in
\cite{Minimal}. Both Lemma \ref{lem:Operations} and Lemma
\ref{lem:Reduced} can be deduced from the more general arguments in
\cite{Minimal}, in particular from Proposition 7.4. For completeness
we give proofs of both Lemma \ref{lem:Operations} and Lemma
\ref{lem:Reduced} using only what is required for our simpler
case. That we may consider only these two network operations and not
more general operations is crucial for the results in Section \ref{sec:Enumerate}.

Here we give go two simple operations on networks that preserve the
O.D.E. equivalence class of the network. 
\begin{enumerate}
\item Adding loops: A single loop is added to all vertices
  in the network. 
\item $k$-Splitting edges: Each edge in the network is replaced by $k$
  identical copies of the edge. 
\end{enumerate}
Intuitively, it should be clear that these operations preserve the
O.D.E. equivalence class of the network; however, a formal proof is
surprisingly difficult if one does not use the notion of linear
equivalence. 

\begin{lem}\label{lem:Operations}
  If network $G'$ is obtained from network $G$ by
  either of the two network operations above then $G$ and
  $G'$ are O.D.E. equivalent. 
\end{lem}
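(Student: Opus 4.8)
The plan is to exploit the equivalence between linear equivalence and O.D.E. equivalence established in \cite{DiasLinear}: rather than constructing functions $F_2$ from $F_1$ directly (which is awkward, see below), I would verify that the two operations preserve the class of \emph{linear} admissible vector fields, and then invoke that result to conclude O.D.E. equivalence.

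First I would make the linear admissible fields explicit. A linear map $F \colon \R \times \R^r \to \R$ invariant under permutations of its last $r$ arguments must have the form $F(x; y_1, \dots, y_r) = a x + b \sum_{l=1}^r y_l$ for scalars $a, b$, since symmetry forces the coefficients of the $y_l$ to coincide. Writing $M$ for the matrix with $M_{iu} = E(u,i)$ (the number of arcs from $u$ to $i$), the associated field on $\R^n$ for cell $i$ is $\dot x_i = a x_i + b \sum_u E(u,i) x_u$, i.e. $\dot x = (a\,\Id + b\,M)x$. Hence the linear admissible vector fields of a degree $r$ network $G = (V,E)$ are exactly the operators in $\mathrm{span}_\R\{\Id, M\}$.

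The verification then reduces to tracking the effect of each operation on $M$ and checking that this span is unchanged; since both operations leave the vertex set fixed, I would take $G_2' = G'$ with the identity isomorphism, so no relabeling is required. Adding a loop at every vertex sends $M$ to $M' = M + \Id$, and $\mathrm{span}_\R\{\Id, M + \Id\} = \mathrm{span}_\R\{\Id, M\}$. Replacing every edge by $k$ copies sends $M$ to $M' = kM$, and for $k \geq 1$ we have $\mathrm{span}_\R\{\Id, kM\} = \mathrm{span}_\R\{\Id, M\}$. In each case the two networks have the same class of linear admissible vector fields, hence are linearly equivalent, hence O.D.E. equivalent by \cite{DiasLinear}.

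I expect the only genuine subtlety to be the one already flagged in the text: a direct argument at the level of general $F$ is delicate because the admissible coupling function $F_2$ for $G'$ must be symmetric in \emph{all} of its coupling arguments. One inclusion is easy --- given $F_2$ one recovers $F_1$ by substitution (for instance $F_1(x; y_1, \dots, y_r) = F_2(x; y_1, \dots, y_r, x)$ for the added-loop operation) --- but producing a fully symmetric $F_2$ realizing a given $F_1$ is not straightforward. Reducing to the linear case via \cite{DiasLinear} avoids this construction entirely and turns the lemma into the two one-line span identities above.
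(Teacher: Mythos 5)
Your proposal is correct and follows essentially the same route as the paper: both reduce O.D.E.\ equivalence to linear equivalence via \cite{DiasLinear} and then check that the two operations preserve the class of admissible linear vector fields. Your span formulation $\mathrm{span}_\R\{\Id, M+\Id\} = \mathrm{span}_\R\{\Id, M\}$ and $\mathrm{span}_\R\{\Id, kM\} = \mathrm{span}_\R\{\Id, M\}$ is just a compact repackaging of the paper's explicit coefficient substitutions ($a \mapsto a-b$, $a \mapsto a+b$, $b \mapsto b/k$, $b \mapsto kb$), and your observation that the vertex set is fixed so the identity isomorphism suffices is a valid tidying of a point the paper leaves implicit.
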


\begin{proof}
  Using \cite{DiasLinear} it is enough to prove that the two networks
  are equivalent when the variables $x_i$ are taken to be in $\R$ and
  the function $F$ is taken to be linear. In this case we observe that
  for a degree $r$ network the function $F$ must take the form
  \begin{equation*}
    F(x; y_1, \dots, y_r) = a \, x + b (y_1 + \dots + y_r). 
  \end{equation*}

  Consider a degree $r$ network. Adding a loop to every vertex we
  obtain a degree $r+1$ network.

  Given a function $F_r: \R^{r}\rightarrow \R$ defined by
  \begin{equation*}
    F_r(x;y_1, \dots,y_r) = a \, x + b (y_1 + \dots + y_r). 
  \end{equation*}
  we define a function $F_{r+1}: \R^{r+1} \rightarrow \R$ by
  \begin{equation*}
    F_{r+1}(x; y_1, \dots, y_{r+1})= (a-b) \, x + b( y_1 + \dots +y_{r+1}) . 
  \end{equation*}
  Clearly we have $F_{r+1}(x;x, y_1, \dots, y_r) = F_r(x;y_1, \dots,
  y_r)$ and consequently the linear vector fields admissible for the
  degree $r$ network are a subset of the linear vector fields
  admissible for the $r+1$ degree network. We can easily go the other
  direction.  Given any function $F_{r+1}: \R^{r+1} \rightarrow \R$ of
  the form
  \begin{equation*}
    F_{r+1}(x; y_1, \dots, y_{r+1})= a \, x + b( y_1 + \dots +y_{r+1})
  \end{equation*}
  we may define a function $F_{r}: \R^{r} \rightarrow \R$ by
  \begin{equation*}
    F_{r}(x; y_1, \dots, y_{r})= (a+b) \, x + b( y_1 + \dots +y_r) . 
  \end{equation*}
  Again we have $F_r(x;y_1, \dots, y_r) = F_{r+1} (x; x, y_1, \cdots ,
  y_r)$ and consequently we see that the two networks have precisely
  the same set of admissible linear vector fields.

  Consider a degree $r$ network. Performing the edge splitting
  operation we obtain a degree $k \times r$ network. Given any
  function $F_r:R^{r}\rightarrow \R$ of the form
  \begin{equation*}
    F_r(x;y_1, \dots,y_r) = a \, x + b (y_1 + \dots + y_r). 
  \end{equation*}
  we may define a function $F_{k \times r}: \R^{k \times r}
  \rightarrow \R$ by
  \begin{equation*}
    F_{k \times r}(x; y_1, \dots, y_{k\times r})= a \, x + \frac{b}{k} (
    y_1 + \dots + y_{k \times r}) . 
  \end{equation*}
  Clearly we have
  \begin{equation}
    F_{k \times r}(x;\overbrace{y_1,\dots,
      y_1}^{k\mathrm{-times}},  \dots, \overbrace{y_r,\dots,
      y_r}^{k\mathrm{-times}}) = F_r(x;y_1, \dots,
    y_r)\label{eq:Fkr}  
  \end{equation}
  and consequently the linear vector fields admissible for the degree
  $r$ network are a subset of the linear vector fields admissible for
  the degree $k\cdot r$ network.

  Given any function $F_{k \times r}: \R^{k \times r} \rightarrow \R$
  of the form
  \begin{equation*}
    F_{k \times r}(x; y_1, \dots, y_{k\times r})= a \, x + b (
    y_1 + \dots + y_{k \times r}) 
  \end{equation*}
  we may define a function $F_{r}: \R^{r} \rightarrow \R$ by
  \begin{equation*}
    F_{r}(x; y_1, \dots, y_{r})= a\, x + k \, b( y_1 + \dots +y_r) . 
  \end{equation*}
  Again equation \eqref{eq:Fkr} holds, and consequently we see that the
  two networks have precisely the same set of admissible linear vector
  fields.

  In both cases we see that the operation produces a new network with
  precisely the same set of admissible linear vector fields. Thus we
  have that the operations preserve the O.D.E. equivalence class.
\end{proof}

The operations create a network with a larger degree. However, when a
network has the required structure, the inverse of these operations
may be applied to produce a network with a smaller degree. 

  \begin{lem}\label{lem:Reduced}
    For any identical edge homogeneous coupled cell network $G$, there
    exists an O.D.E. equivalent network $G_M$ with the following
    properties:
    \begin{enumerate}
    \item At least one vertex has no loops, and 
    \item The greatest common divisor of the multiplicities of the
      edges is 1.
    \end{enumerate}
    We will refer to $G_M$ as a \emph{reduced} network
    associated to $G$. If $G$ is not a reduced network then $G_M$ has
    a lower degree than $G$.
  \end{lem}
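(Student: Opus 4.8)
The plan is to construct the reduced network $G_M$ explicitly by inverting the two operations from Lemma \ref{lem:Operations}. By that lemma, removing a common loop from every vertex and dividing all edge multiplicities by a common factor both preserve the O.D.E. equivalence class, so the strategy is to apply these inverse operations as many times as possible and argue that what remains satisfies the two stated properties.

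First I would address property (1). Let $\ell$ be the minimum, over all vertices $v$, of the loop multiplicity $E(v,v)$. If $\ell \geq 1$, then every vertex carries at least $\ell$ loops, so I can apply the inverse of the loop-adding operation $\ell$ times, removing one loop from every vertex each time. This is legitimate precisely because the loop-removal operation requires every vertex to have a loop available, which is guaranteed by the definition of $\ell$ as the minimum. After removing $\ell$ loops from each vertex, at least one vertex (the one achieving the minimum) has loop multiplicity zero, establishing property (1). Each removal lowers the degree by one, and by Lemma \ref{lem:Operations} the resulting network is O.D.E. equivalent to $G$.

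Next I would handle property (2) on the network resulting from the first step. Let $g$ be the greatest common divisor of all the edge multiplicities $E(u,v)$ (including any remaining loop multiplicities). If $g > 1$, then $g$ divides every multiplicity, so the network is exactly a $g$-split of the network whose multiplicity function is $E/g$; dividing every multiplicity by $g$ is the inverse of the $k$-splitting operation with $k = g$, which by Lemma \ref{lem:Operations} preserves the O.D.E. equivalence class. After this division the new multiplicities have greatest common divisor $1$, giving property (2). The one point requiring care is that dividing by $g$ must not reintroduce loops on the vertex that was made loop-free in the first step: since that vertex had loop multiplicity $0$, and $0/g = 0$, its loop multiplicity remains zero, so property (1) is preserved. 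I would define $G_M$ to be the network obtained after both steps.

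The main obstacle, and the point deserving the most attention, is the order of operations and their interaction: one must verify that performing loop-removal first and edge-division second yields a network satisfying both conditions simultaneously, rather than having the second operation undo the first. The argument above resolves this because division by $g$ fixes the zero loop-multiplicity vertex, but I would state explicitly that the operations commute in the relevant sense. Finally, for the degree claim, I would observe that $G$ fails to be reduced exactly when $\ell \geq 1$ or $g \geq 2$; in either case at least one inverse operation is genuinely applied, and since loop-removal strictly decreases the degree and division by $g > 1$ strictly decreases the degree, the degree of $G_M$ is strictly less than that of $G$ whenever $G$ is not already reduced.
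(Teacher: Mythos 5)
Your proposal is correct and follows essentially the same argument as the paper: remove the minimum number of loops $s$ from every vertex, then divide all edge multiplicities by their greatest common divisor $d$, invoking Lemma \ref{lem:Operations} for both inverse operations and noting that division cannot reintroduce a loop at the loop-free vertex. Your explicit treatment of the strict degree decrease when $G$ is not reduced is a small addition the paper leaves implicit, but the construction and justification are the same.
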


  \begin{proof}
    Let $s$ denote the minimum number of loops on a vertex in
    $G$. Consider the new network $G'$ formed by removing exactly $s$
    loops from every vertex. Clearly $G'$ has a vertex with no
    loops. Since $G$ may be obtained from $G'$ by adding $s$ loops we
    see that $G$ and $G'$ are O.D.E. equivalent. Let $d$ denote the
    greatest common divisor of the edge multiplicities in $G'$. We may
    form a new network $G_M$ by dividing all the edge multiplicities
    by $d$. Since $G'$ had a vertex with no loops so does $G_M$. The
    greatest common divisor of the edge multiplicities of $G_M$ is 1
    by construction. Since we may obtain $G'$ from $G_M$ by splitting
    each edge into $d$ edges we see that $G'$ and $G_M$ are
    O.D.E. equivalent by Lemma \ref{lem:Operations}. Thus $G$ and
    $G_M$ are O.D.E. equivalent and $G_M$ has the required
    properties. 
  \end{proof}

    The use of $G_M$ to denote the reduced network is not
    accidental. We will now show that $G_M$ is indeed the unique
    minimal network in the O.D.E. equivalence class of $G$.  Since any
    network is O.D.E. equivalent to such a reduced network, it
    suffices to show that two reduced networks that are
    O.D.E. equivalent are isomorphic.

  \begin{lem}
    If $G_1$ and $G_2$ are reduced network,s and $G_1$ and $G_2$ are
    O.D.E. equivalent, then $G_1$ and $G_2$ are isomorphic.
  \end{lem}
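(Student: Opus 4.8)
The plan is to pass through the linear equivalence formulation guaranteed by \cite{DiasLinear}, which reduces the whole statement to a short computation with adjacency matrices. Encode a degree $r$ network $G$ on $n$ vertices by its adjacency matrix $A$, where $A_{ij}$ is the multiplicity of the edge from vertex $j$ to vertex $i$; homogeneity says exactly that every row sum equals $r$, i.e. $A\mathbf{1} = r\mathbf{1}$. Taking $P=\R$ and a linear $F(x;y_1,\dots,y_r) = a\,x + b(y_1+\cdots+y_r)$, the associated vector field is $\dot x = (aI+bA)x$, so as $a,b$ range over $\R$ the set of admissible linear vector fields is precisely the span $\langle I, A\rangle$. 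Network isomorphism corresponds to conjugating $A$ by a permutation matrix. Thus O.D.E. equivalence of $G_1$ and $G_2$, with matrices $A_1$ and $A_2$, amounts to the existence of a permutation matrix $\Pi$ with $\langle I, A_1\rangle = \langle I, B\rangle$, where $B=\Pi A_2\Pi^{-1}$ is the adjacency matrix of a reduced network $G_2'$ isomorphic to $G_2$. It then suffices to prove $A_1 = B$, since this exhibits $G_1 \cong G_2' \cong G_2$.

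The key steps are as follows. First I would dispose of the degenerate case where $A_1$ is a scalar multiple of $I$: homogeneity forces any diagonal matrix to equal $rI$, and the first reduced condition (some vertex carries no loops) then forces $r=0$, so $A_1=0$; the same applies to $B$, and both are the edgeless network on $n$ vertices, hence isomorphic. In the main case $A_1\notin\langle I\rangle$ the span is two-dimensional, and since $\langle I,B\rangle=\langle I,A_1\rangle$ is also two-dimensional we have $B\notin\langle I\rangle$, so I may write $B=\alpha I + \beta A_1$ with $\beta\neq 0$. Because $A_1$ is non-scalar and homogeneous it has a strictly positive off-diagonal entry; the matching entry of $B$ equals $\beta$ times it and is a non-negative integer, forcing $\beta>0$. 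Now apply the first reduced condition to both matrices: $\min_i (A_1)_{ii}=0$ and $\min_i B_{ii}=0$, while $B_{ii}=\alpha+\beta(A_1)_{ii}$ with $\beta>0$ gives $\min_i B_{ii}=\alpha$, so $\alpha=0$ and $B=\beta A_1$. Finally, any nonzero entry shows $\beta$ is a positive rational $p/q$ in lowest terms, so $qB=pA_1$; comparing the greatest common divisor of all entries on each side and using the second reduced condition ($\gcd=1$ for both $A_1$ and $B$) gives $q=p$, whence $p=q=1$ and $\beta=1$. Therefore $B=A_1$.

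I expect the main obstacle to be the bookkeeping that pins down $\alpha$ and $\beta$ from the two defining properties of a reduced network, together with the two facts that legitimize the linear-algebra translation: that the admissible linear vector fields genuinely fill out the entire two-dimensional span $\langle I,A\rangle$, and that network isomorphism is exactly simultaneous conjugation of the adjacency matrix by a permutation. Establishing $\beta>0$ cleanly is where the homogeneity hypothesis is used, to rule out a non-scalar diagonal $A_1$, and the closing gcd comparison is where both reduced conditions are genuinely needed — this is precisely what makes \emph{reduced} the correct normal form and forces $\beta=1$ rather than merely a positive rational. Once $\alpha=0$ and $\beta=1$ are in hand, the conclusion $A_1=B$, and hence $G_1\cong G_2$, is immediate.
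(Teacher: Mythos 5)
Your proposal is correct and takes essentially the same route as the paper: pass to the linear admissible vector fields $(aI+bA)x$, equate the resulting two-dimensional spans $\langle I, A_1\rangle = \langle I, B\rangle$, and use reduced condition (1) to kill the identity coefficient and reduced condition (2) via a gcd comparison to force the remaining coefficient to be $1$, concluding $A_1 = B$. Your minimum-over-the-diagonal argument for $\alpha = 0$ is a mild streamlining of the paper's two-case sign-contradiction argument, and your explicit treatment of the scalar case $A_1 \in \langle I\rangle$ handles a degenerate ($r=0$) situation the paper passes over silently, but the substance of the two proofs is identical.
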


  \begin{proof} Let $G_2'$ be the network isomorphic to $G_2$ which
  appears in the definition of O.D.E. equivalence. We will show that
  $G_1$ and $G_2'$ are equal. If we take the phase space $P$ for the
  cells to be $\R$ and consider linear functions of the form $F(x,
  y_1, \dots, y_r) = a \, x + b (y_1 + \dots + y_r)$, then we see that
  for any choice of $a_1, b_1$ there must exist $a_2, b_2$, and for
  any choice of $a_2, b_2$ there must exist $a_1, b_1$, such that
  \begin{equation*}
    (a_1 \Id + b_1 A) \, x= (a_2 \Id + b_2 B) \, x
  \end{equation*}
  where $A$ is the adjacency matrix associated to $G_1$, $B$ is the
  adjacency matrix associated to $G_2'$, and $x = ( x_1, \dots, x_n)^t
  \in \R^n$.  Since this holds for all $x \in \R^n$ we must have
  \begin{equation}\label{eq:Matrix}
    a_1 \Id + b_1 A= a_2 \Id + b_2 B.
  \end{equation}
  This matrix condition can be reduced to a system of linear equations
  of two types:
  \begin{eqnarray}
    a_1 + b_1 A_{ii} &=a_2 + b_2 B_{ii} & \quad 1 \leq i \leq n\label{eq:Diagonal} \\
    \qquad b_1 A_{ij} &= b_2 B_{ij}&  \quad 1\leq i,j \leq n, i\neq j \label{eq:OffDiagonal}
  \end{eqnarray}
  Since both $A$ and $B$ have a zero on the diagonal they must have
  some non-zero off diagonal entries in order to have the required row
  sums. Now by \eqref{eq:OffDiagonal} we see that $b_1$ and $b_2$ must
  have the same sign and that $A_{ij} \neq 0$ if and only if $B_{i j}
  \neq 0$.

  Since both $A$ and $B$ have at least one zero entry on the diagonal,
  either there must be an $1 \leq i \leq n$ such that $A_{ii} =
  B_{ii}=0$ or there must exist $i\neq j$ such that $A_{ii}=0$ but
  $B_{ii} >0$ and $A_{jj} >0$ but $B_{jj}=0$. If we assume that there
  exists $1 \leq i ,j \leq n$ with $i\neq j$ such that $A_{ii}=0$ but
  $B_{ii} >0$ and $A_{jj} >0$ but $B_{jj}=0$, then we obtain
  \begin{eqnarray}
    \label{eq:1}
    \qquad \quad a_1 &=& a_2 + b_2 B_{ii}\\
    a_1 + b_1 A_{jj} &=& a_2
  \end{eqnarray}
  from which we immediately get $-b_1 A_{jj} = b_2 B_{ii}$ which
  contradicts our earlier observation that $b_1$ and $b_2$ must have
  the same sign. Thus there exists an $1 \leq i \leq n$ such that
  $A_{ii} = B_{ii}=0$ and we can obtain from \eqref{eq:Diagonal} that
  $a_1 = a_2$.

  Thus we must have
  \begin{equation*}
    b_1 A_{i j} = b_2 B_{ij}
  \end{equation*}
  for all $1 \leq i,j \leq n$. Now $b_2$ divides $b_1 A_{ij}$ for all
  $i,j$. Since the greatest common divisor of the entries of $A$ is 1
  we must have $b_2$ divides $b_1$. Similarly $b_1$ divides $b_2
  B_{ij}$ for all $i,j$. Since the greatest common divisor of the
  entries of $B$ is 1, we must have $b_1$ divides $b_2$. Since $b_1$
  and $b_2$ have the same sign, we must have $b_1 = b_2$.

  Finally we are able to conclude that $A = B$ so $G_1$ is equal to
  $G_2'$ as claimed.
\end{proof}

\section{Examples}

First we show how Figure 1 and Figure 2 of \cite{Minimal} are related
using our network operations. 
\begin{figure}[h]
  \centering
  \begin{tabular}{ccc}
    \includegraphics[scale=.3]{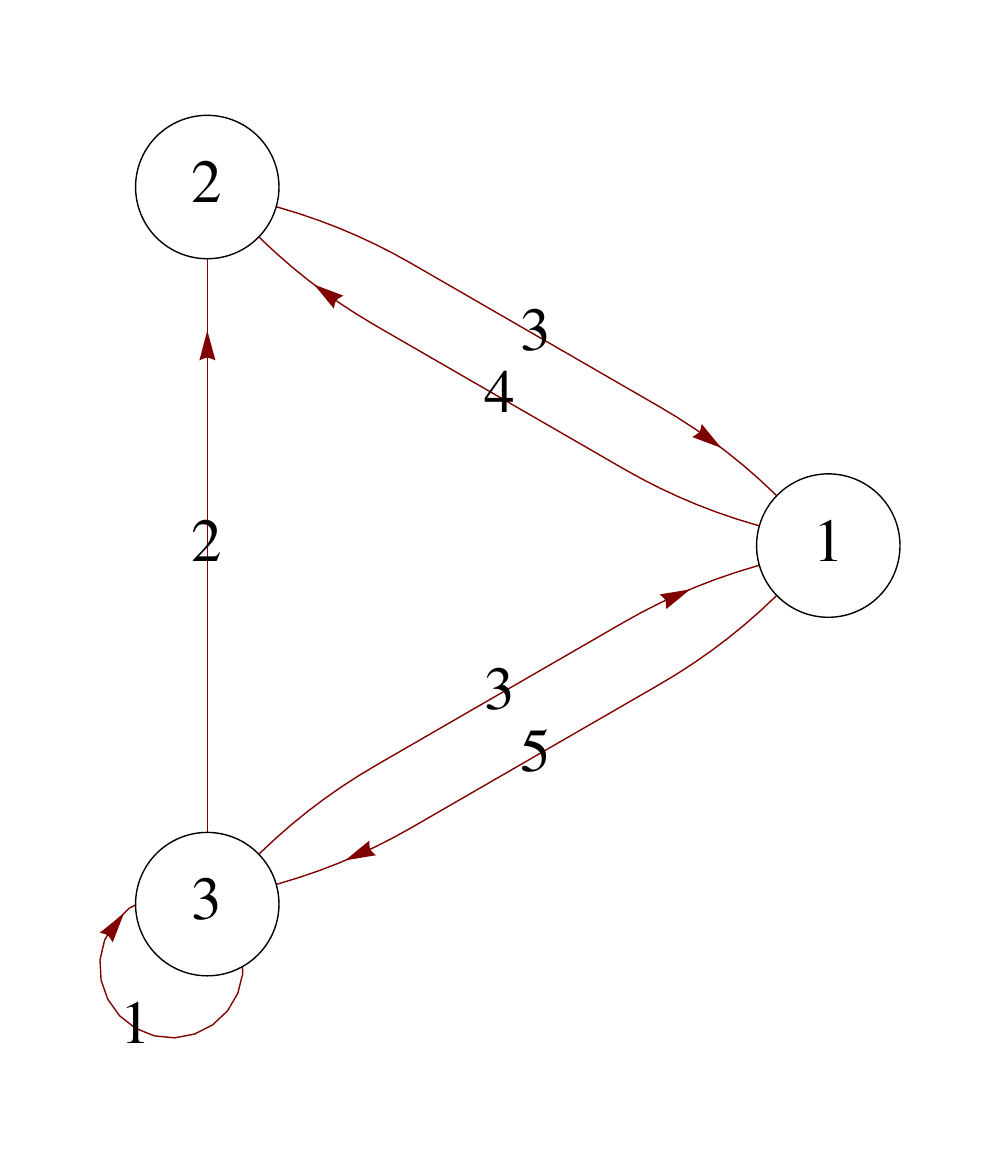}
    & \includegraphics[scale=.3]{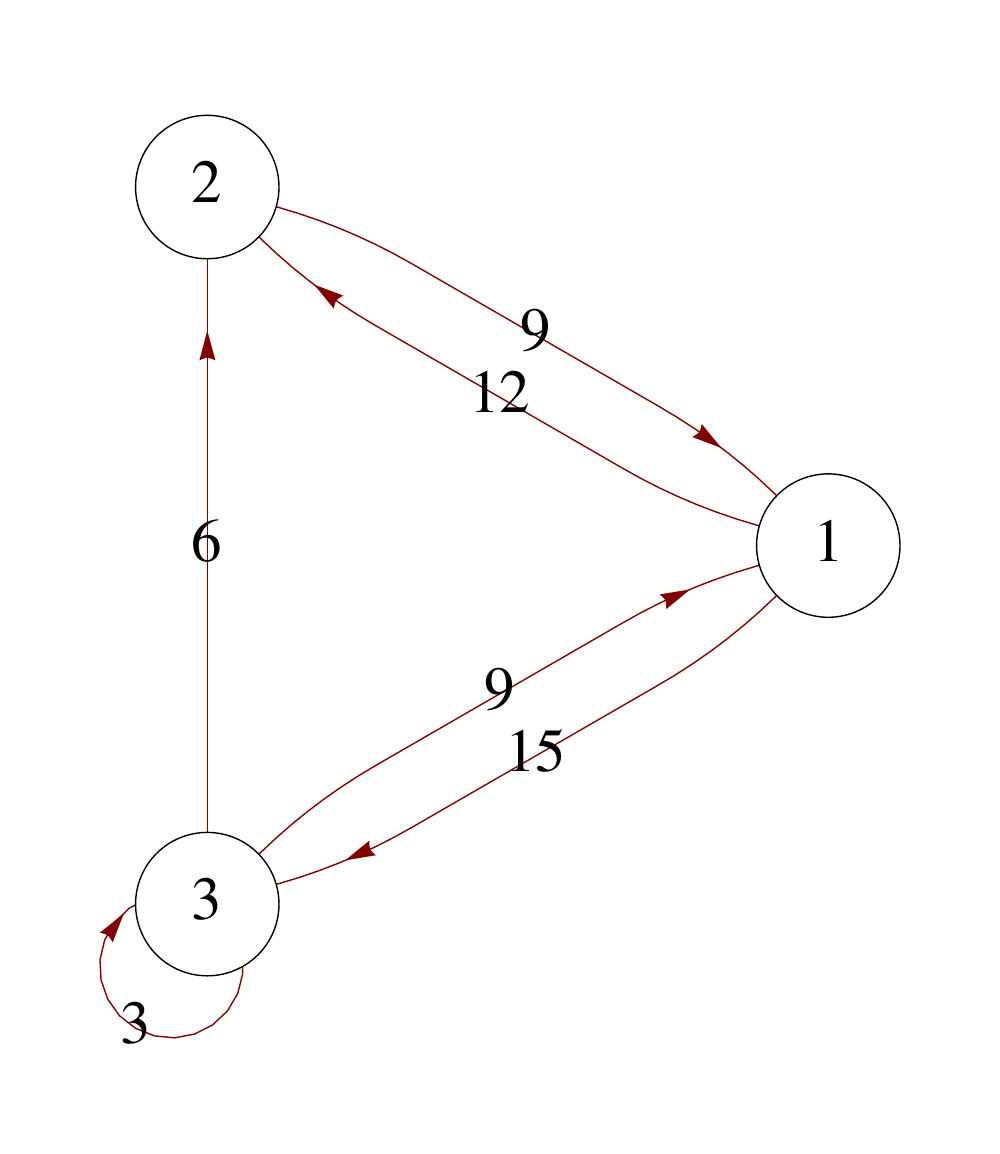}& 
    \includegraphics[scale=.3]{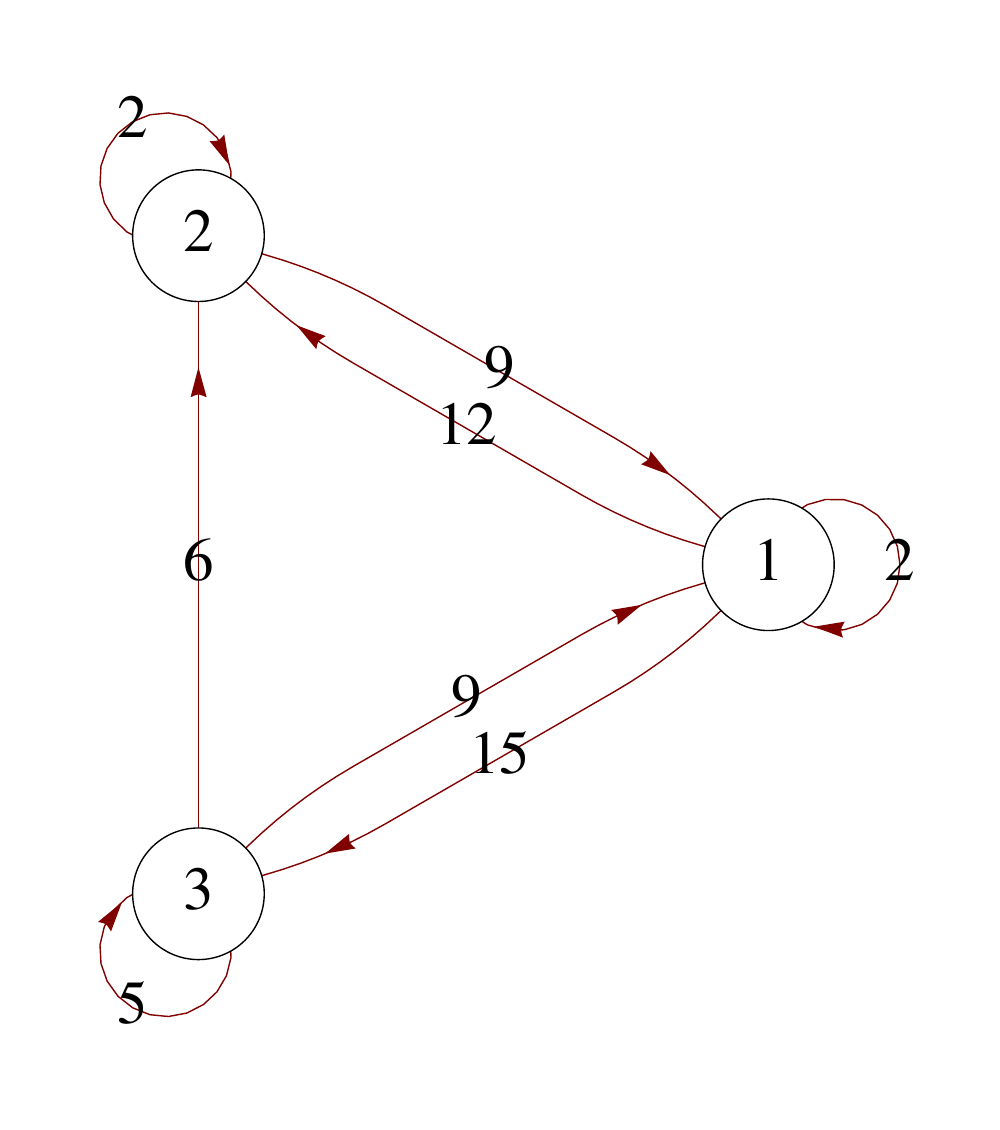}\\ 
    (1)&(2)&(3)
  \end{tabular}
 \caption{Transforming Figure 1 to Figure 2 of \protect \cite{Minimal} using
  network operations. Edge labels represent edge multiplicities. }
  \label{fig:Transforming}
\end{figure}

Referring to our Figure \ref{fig:Transforming} notice that network (1)
satisfies our criterion for being a minimal network. If we split each
edge of network (1) into 3 edges then we obtain network (2), which is
O.D.E. equivalent to network (1). If we now adjoin 2 loops to each
vertex of network (2), then we obtain network (3), which is O.D.E. to
network (2) and hence O.D.E. equivalent to network (1).

Next we apply the results of the previous section to the connected 3
cell degree 2 networks examined in \cite{Leite}. They note
that up to permutation there are 38 connected 3 cell degree 2 networks
but that 8 of them are O.D.E. equivalent to the lower degree
networks. Each of these 8 is obtained from one of the 4 minimal
connected 3 cell degree 1 networks by either adjoining a loop to
every cell or by doubling all the edges, see Figure \ref{fig:Example}.

\begin{figure}[h]
  \centering
  \includegraphics[scale = .7]{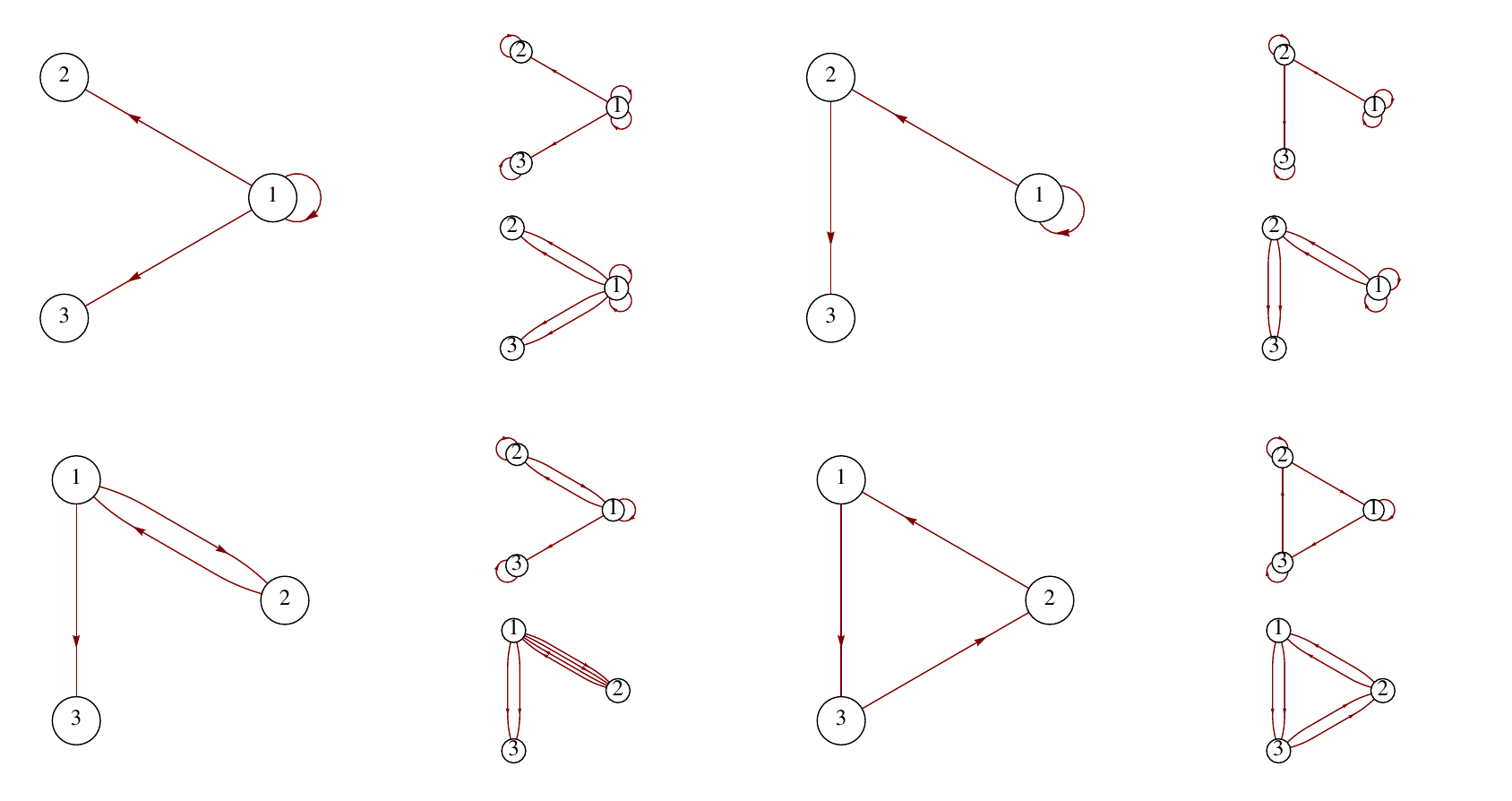}
  \caption{The minimal connected 3 cell degree 1 networks and their
    associated connected 3 cell degree 2 networks}
  \label{fig:Example}
\end{figure}

\section{Enumeration}
\label{sec:Enumerate}

We begin by outlining the work of Aldosray and Stewart in enumerating
homogeneous coupled cell networks. They use the counting result known
as Burnside's Lemma to enumerate all identical edge homogeneous
coupled cell networks with $n$ cells and degree $r$ counted up to
isomorphism. 

To be explicit let us take $V= \{1, \dots, n\}$. Let us denote the set
of all multigraphs on $V$ with constant in-degree $r$ by
$\Omega_{n,r}$. The group of bijections on $V$ is the symmetric group
on $n$ elements, denoted $\Sym_n$. Each such bijection induces a
map on $\Omega_{n,r}$. Thus we have a group action of $\Sym_n$
on $\Omega_{n,r}$.  Two networks are related by $\Sym_n$ if and
only if they are isomorphic networks.  Since we are counting the
networks up to isomorphism what we actually want to count is the
number of distinct $\Sym_n$ orbits in $\Omega_{n,r}$. Burnside's
Lemma is a tool for counting the number of orbits of a group action,
it states
\begin{equation*}
  |\mathrm{Orb}_{\Omega_{n,r}}(\Sym_n)| = \frac{1}{|\Sym_n|} \sum_{g
    \in \Sym_n} |\mathrm{Fix}_{\Omega_{n,r}} (g)|. 
\end{equation*}
where $\mathrm{Fix}_{\Omega_{n,r}} (g) = \{ \omega \in \Omega_{n,r} :
g \cdot \omega = \omega \}$.  If $g$ and $h$ are conjugate elements of
$\Sym_n$ then $|\mathrm{Fix}_{\Omega_{n,r}} (g)| =
|\mathrm{Fix}_{\Omega_{n,r}} (h)|$ and consequently we may sum over
conjugacy classes rather than individual elements of $\Sym_n$. Suppose
that $C_1 \, \dots, C_m$ are the conjugacy classes in $\Sym_n$. Let
$g_i$ be some representative of the conjugacy class $C_i$. We may
write our sum as 
\begin{equation}\label{eq:ModBurnside}
  |\mathrm{Orb}_{\Omega_{n,r}}(\Sym_n)| = \frac{1}{|\Sym_n|} \sum_{i
    =1}^m |C_i| |\mathrm{Fix}_{\Omega_{n,r}} (g_i)|. 
\end{equation}

There is a bijection between conjugacy classes of $\Sym_n$ and
partitions of the integer $n$. Following \cite{StewartCounting} we
will denote a partition of $n$
\begin{equation*}
  \alpha_1 \cdot 1 + \alpha_2 \cdot 2 +\cdots +\alpha_n \cdot n = n 
\end{equation*}
by $[1^{\alpha_1}2^{\alpha_2}\dots n^{\alpha_n}]$. The multiplicative
form of this notation is perhaps unfortunate but should not cause
confusion. The strength of this notation becomes apparent when we
agree that if $\alpha_i =0$ then the $i^{\alpha_i}$ term in the
expression may be omitted. Using this notation the 7 partitions of
$n=5$ may be expressed as follows:
\begin{equation*}
  \begin{array}{l@{\hspace{.5 cm}}l@{\hspace{1 cm}}l@{\hspace{.5 cm}}l}
    5 \cdot 1 & [1^5] & 1 \cdot 1 + 2 \cdot 2&[1^12^2]\\
    3 \cdot 1 + 1 \cdot 2&[1^32^1]&1 \cdot 2 + 1 \cdot 3&[2^13^1]\\
    2 \cdot 1 + 1 \cdot 3&[1^23^1]&    1 \cdot 5&[5^1] \\
    1 \cdot 1 + 1 \cdot 4&[1^14^1]
  \end{array}
\end{equation*}
The set of all partitions of $n$ will be denoted by $\Pi_n$. An
element of $\Sym_5$ can be associated to each $\rho \in \Pi_n$ as follows:
\begin{equation*}
  \begin{array}{l@{\hspace{.5 cm}}l@{\hspace{1 cm}}l@{\hspace{.5 cm}}l}
     {}[1^5] &(1)(2)(3)(4)(5)&[1^12^2] &(1)(2\,3)(4\,5)\\
    {} [1^32^1] &(1)(2)(3)(4\,5)&[2^13^1] &(1\,2)(3\,4\,5)\\
    {}[1^23^1]&(1)(2)(3\,4\,5)&[5^1]& (1\,2\,3\,4\,5)\\
    {}[1^14^1]&(1)(2\,3\,4\,5)
   \end{array}
 \end{equation*}
Every permutation in $\Sym_5$ is conjugate to one of the permutations
that correspond to a partition of 5. 

Every permutation $\sigma \in \Sym_n$ may be written as a product of
disjoint cycles in a fashion that is unique up to the order to the
cycles. The lengths of these cycles form a partition on $n$ called the
\emph{cycle type} of the permutate $\sigma$. The permutation
corresponding to a given cycle type is called the \emph{normal form}
of the cycle type. Every permutation is conjugate to the normal form
of its cycle type.

 Looking at the formula
\eqref{eq:ModBurnside} we see that it would be advantageous to know the
size of the conjugacy class associated to a given partition of
$n$. The size of the conjugacy class corresponding to
$[1^{\alpha_1}2^{\alpha_2}\cdots n^{\alpha_n}]$ is
\begin{equation}
  \label{eq:3}
  \frac{n!} {1^{\alpha_1}2^{\alpha_2}\cdots
n^{\alpha_n} \alpha_1! \alpha_2! \dots \alpha_n!}.
\end{equation}
If we consider the partition determines the pattern of parentheses 
\begin{equation*}
  \begin{array}{ll}
    [1^22^23^1] & (\rule{10 pt}{.5 pt} )(\rule{10 pt}{.5 pt} ) (\rule{10 pt}{.5 pt} \,
    \rule{10 pt}{.5 pt} ) (\rule{10 pt}{.5 pt}\,
    \rule{10 pt}{.5 pt} ) (\rule{10 pt}{.5 pt} \,\rule{10 pt}{.5 pt}\,
    \rule{10 pt}{.5 pt} )
  \end{array}
\end{equation*}
then $n!$ is the number of ways of writing $1, \dots, n$ in the
blanks.Observing that we can permute each cycle cyclically, that is
\begin{equation*}
  ( 1 2 3 ) \quad (2 3 1) \quad (3 1 2)
\end{equation*}
are all the same 3-cycle, we must factor out the 
$1^{\alpha_1}2^{\alpha_2}\cdots n^{\alpha_n} $ possible ways of
expressing all the cycles. Finally we
observe that we may permute cycles of the same length freely, so we
must factor out the  $a_1! \alpha_2 ! \dots \alpha_n!$ possible
orderings of the cycles.

The main difficulty in enumerating the orbits of $\Sym_n$  lies  in
determining the size of the fixed point set
$\mathrm{Fix}_{\Omega_{n,r}}(g_i)$. We will give the formula for this
here and refer the reader to the details in \cite{StewartCounting}. 

\textbf{Definition}: Given $\rho \in \Pi_n$ and $s \in \{1, \dots,
n\}$ we may define 
\begin{equation*}
  \Phi_{s, \rho} (z) = \prod_{k=1}^n (1- z^{\frac{k}{h}})^{-\alpha_k^\rho h}
\end{equation*}
where $h= \gcd (s,k)$.

Clearly $\Phi_{s,\rho}(z)$ is analytic about $0$ and hence we may
write
\begin{equation*}
  \Phi_{s, \rho} (z) = \sum_{r=1}^\infty \phi_r(s,\rho) z^r.
\end{equation*}

\begin{thm}[Theorem 8.3 \cite{StewartCounting}]
  Let $n, r\in \N\setminus \{0\}$ . Let $H_{n,r}$ denote the number of
  $n$ cell degree $r$ networks counted up to isomorphism.  $H_{n,r}$
  is given by
  \begin{equation*}
    H_{n,r} = \frac{1}{n!} \sum_{\rho \in \Pi_n}   \frac{n!} {1^{\alpha_1}2^{\alpha_2}\cdots
      n^{\alpha_n} \alpha_1! \alpha_2! \dots \alpha_n!} \prod_{k=1}^n
    \phi_r(k,\rho)^{\alpha_k^\rho} .
  \end{equation*}
\end{thm}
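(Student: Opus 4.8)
The plan is to prove the formula directly from Burnside's Lemma in the form \eqref{eq:ModBurnside}, so that the entire task reduces to evaluating the fixed-point count $|\mathrm{Fix}_{\Omega_{n,r}}(g_\rho)|$ for a representative $g_\rho$ of the conjugacy class indexed by the partition $\rho = [1^{\alpha_1}\cdots n^{\alpha_n}]$, together with the conjugacy-class size \eqref{eq:3}. First I would record the action precisely: identifying a network with its edge multiplicity function $E: V\times V \to \N$, the permutation $g_\rho$ acts by $(g\cdot E)(u,v) = E(g^{-1}u, g^{-1}v)$, so $E$ is fixed exactly when $E$ is constant on the orbits of $g_\rho$ acting diagonally on $V \times V$. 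Because an invariant $E$ gives every vertex in a cycle of $g_\rho$ the same in-degree, the constraint $\sum_u E(u,v) = r$ need only be imposed once per vertex cycle.

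The key step is to analyze the orbits of $g_\rho$ on $V\times V$ and their interaction with the in-degree constraint. Writing $g_\rho$ as a product of disjoint cycles, every orbit on $V\times V$ lies inside a product $\tau \times \sigma$ of a source cycle $\tau$ (length $k$) and a target cycle $\sigma$ (length $s$). I would show that $g_\rho$ has exactly $h = \gcd(k,s)$ orbits on $\tau \times \sigma$, each of size $\mathrm{lcm}(k,s)$, and that, after fixing one target vertex $v_0 \in \sigma$, each such orbit meets the slice $\{(u,v_0): u \in \tau\}$ in exactly $k/h$ points. Consequently, assigning the common multiplicity $m \in \N$ to an orbit contributes $(k/h)\,m$ to the in-degree at $v_0$. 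This is the crux of the argument; everything else is bookkeeping.

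With this in hand I would assemble the generating function. Fixing a target cycle $\sigma$ of length $s$ and tracking the in-degree at $v_0$ by the exponent of $z$, a single orbit coming from a source cycle of length $k$ contributes the factor $\sum_{m\ge 0} z^{(k/h)m} = (1-z^{k/h})^{-1}$; the $h$ orbits from that source cycle give $(1-z^{k/h})^{-h}$; and the $\alpha_k^\rho$ source cycles of length $k$ give $(1-z^{k/h})^{-\alpha_k^\rho h}$. Taking the product over $k$ yields exactly $\Phi_{s,\rho}(z)$, so the number of $g_\rho$-invariant choices of in-edges at $\sigma$ realizing in-degree $r$ is precisely $\phi_r(s,\rho) = [z^r]\Phi_{s,\rho}(z)$.

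Finally, because the in-edge assignments to distinct target cycles are independent, the fixed-point count factors as a product over target cycles (indexed by their length $s$, with $\alpha_s^\rho$ cycles of each length), giving $|\mathrm{Fix}_{\Omega_{n,r}}(g_\rho)| = \prod_{s=1}^n \phi_r(s,\rho)^{\alpha_s^\rho}$. Substituting this together with \eqref{eq:3} into \eqref{eq:ModBurnside} yields the stated formula. The main obstacle I anticipate is the orbit and slice count in the second step — correctly deriving the exponent $k/h$ on $z$ and the multiplicity $h$ of orbits — since these encode how the cyclic symmetry of the source cycle is partially broken by the period of the target cycle, and an off-by-$\gcd$ error there would propagate into the wrong generating function.
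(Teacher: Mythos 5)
Your proposal is correct, but note that the paper itself contains no proof of this theorem to compare against: it quotes the result from \cite{StewartCounting}, records only Burnside's lemma in the form \eqref{eq:ModBurnside}, the class-size formula \eqref{eq:3}, and the definition of $\Phi_{s,\rho}$, and explicitly refers the reader to \cite{StewartCounting} for the computation of $|\mathrm{Fix}_{\Omega_{n,r}}(g_i)|$. Your argument supplies exactly that missing computation, and the step you flagged as the crux checks out. A fixed multigraph is a $\langle g_\rho\rangle$-invariant multiplicity function, i.e.\ one constant on the diagonal orbits in $V\times V$; on a product $\tau\times\sigma$ of a source cycle of length $k$ and a target cycle of length $s$, each point has orbit of size $\mathrm{lcm}(k,s)$, so there are $ks/\mathrm{lcm}(k,s)=\gcd(k,s)=h$ orbits; the orbit of $(u,v_0)$ meets the slice $\tau\times\{v_0\}$ precisely in the points $(g^{sj}u,v_0)$, and since $g^s$ restricted to $\tau$ has order $k/h$, that is exactly $k/h$ points, so each orbit's multiplicity $m$ contributes $(k/h)m$ to the in-degree at $v_0$ and yields the factor $(1-z^{k/h})^{-h}$ per source cycle of length $k$, recovering $\Phi_{s,\rho}$ exactly as defined in the paper. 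Your observation that invariance forces the in-degree to be constant along each target cycle (so the degree-$r$ constraint is imposed once per cycle) and that assignments to distinct target cycles are independent gives $|\mathrm{Fix}_{\Omega_{n,r}}(g_\rho)| = \prod_{s=1}^n \phi_r(s,\rho)^{\alpha_s^\rho}$, and substituting this with \eqref{eq:3} into \eqref{eq:ModBurnside} is the stated formula. The only cosmetic discrepancy is that the paper writes the expansion of $\Phi_{s,\rho}$ as starting at $r=1$, whereas the series has constant term $1$; this does not affect the coefficient extraction at $z^r$ for $r\geq 1$.
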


We use this theorem to generate Table \ref{tab:Hnr}. 
\begin{table}[h]
 \centering
   \begin{tabular}{rr|rrrrrr}
     &&\multicolumn{6}{c}{$r$}\\
     && 1 & 2 & 3 & 4 & 5 & 6 \\ \hline
     &1 & 1 & 1 & 1 & 1 & 1 & 1 \\
     &2 & 3 & 6 & 10 & 15 & 21 & 28 \\
     $n$ &3 & 7 & 44 & 180 & 590 & 1582 & 3724 \\
     &4 & 19 & 475 & 6915 & 63420 & 412230 & 2080827 \\
     &5 & 47 & 6874 & 444722 & 14072268 & 265076184 & 3405665412 \\
     &6 & 130 & 126750 & 43242604 & 5569677210 & 355906501686 & 13508534834704
\end{tabular}
\caption{The number of $n$ cell degree $r$ networks counted up to
  isomorphism, $H_{n,r}$. }
  \label{tab:Hnr}
\end{table}

This count however includes disconnected coupled cell networks. From
the perspective of dynamical systems we are interested only in the
connected identical edge coupled cell networks. A disconnected system
can be decomposed into a number of connected systems. Thus a
disconnected $n$ cell network corresponds to a partition of $n$ with
$\alpha_n = 0$ i.e. any partition of $n$ except $[n^1]$.

If we denote the number of connected $n$ cell degree $r$ networks by
$K_{n,r}$ then we may enumerate the number of disconnected coupled
cell networks as follows
\begin{equation*}
  \sum_{\stackrel{\rho \in \Pi_n}{ \alpha_n^\rho = 0}} \prod_{m=1}^{ n-1}
  {K_{m,r} +\alpha_m^\rho -1 \choose \alpha_m^\rho}
\end{equation*}
where 
\begin{equation*}
   {K_{m,r} +\alpha_m^\rho -1 \choose \alpha_m^\rho} 
\end{equation*}
is the number of ways of choosing $\alpha_m^\rho$ networks from the
$K_{m,r}$ distinct connected $m$ cell networks with replacement and
where order does not matter. From this we obtain

\begin{thm}[Theorem 10.1 \cite{StewartCounting}]
  Let $n, r\in \N\setminus \{0\}$ .  Let $K_{n,r}$ denote the number
  of minimal connected $n$ cell degree $r$ networks. We have $K_{1,r}
  = H_{1,r} =1$ and for $n \geq 2$
  \begin{equation*}
    K_{n,r} = H_{n,r} - \sum_{\stackrel{\rho \in \Pi_n}{\alpha_n^\pi =
        0} } \prod_{m=1}^{ n-1}
    {K_{m,r} +\alpha_m^\rho -1 \choose \alpha_m^\rho}.
  \end{equation*}
\end{thm}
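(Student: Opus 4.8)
The plan is to derive the recursion by computing $H_{n,r}$ a second time, organizing all networks according to their decomposition into connected components, and then comparing with the definition of $K_{n,r}$. The base case $K_{1,r} = H_{1,r} = 1$ is immediate: the only $1$-cell degree $r$ network is the single vertex carrying $r$ loops, and it is connected. For the inductive content I would first establish the structural fact that the connected components of a degree $r$ network are themselves degree $r$ networks, where ``connected'' is taken in the weak sense (two vertices lie in the same component when an undirected path joins them through the underlying multigraph). Since every arc terminating at a vertex $v$ has its source in the same weak component as $v$, the in-degree of $v$ computed inside its component equals its in-degree $r$ in the full network. Hence each component is a degree $r$ network on some $m \leq n$ cells, and an $n$ cell degree $r$ network is exactly a disjoint union of connected degree $r$ networks whose cell counts partition $n$.

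Next I would record the isomorphism bookkeeping, which is the heart of the argument. Any isomorphism of networks carries connected components to connected components and so induces a bijection between the two component collections matching isomorphic components of equal size; conversely, any such size-respecting matching of isomorphic components assembles into an isomorphism of the ambient networks. Therefore an isomorphism class of degree $r$ networks on $n$ cells is the same datum as an unordered collection, with repetitions allowed, of isomorphism classes of connected degree $r$ networks whose sizes sum to $n$. Grouping these collections by the partition $\rho = [1^{\alpha_1}2^{\alpha_2}\cdots n^{\alpha_n}]$ that records how many components of each size occur, the number of classes realizing a fixed $\rho$ factors over sizes, and for each size $m$ the number of ways to choose $\alpha_m^\rho$ connected $m$-cell networks from the $K_{m,r}$ available, with replacement and without regard to order, is the multiset coefficient $\binom{K_{m,r} + \alpha_m^\rho - 1}{\alpha_m^\rho}$ already anticipated in the surrounding text.

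Summing over all partitions of $n$ then yields the closed form
$$
  H_{n,r} = \sum_{\rho \in \Pi_n} \prod_{m=1}^{n} \binom{K_{m,r} + \alpha_m^\rho - 1}{\alpha_m^\rho}.
$$
Finally I would isolate the connected networks. A partition of $n$ has $\alpha_n^\rho = 1$ precisely when it is $[n^1]$, corresponding to a single component of size $n$, that is, a connected network; every other partition has $\alpha_n^\rho = 0$ and therefore forces at least two components, hence a disconnected network. The term $\rho = [n^1]$ contributes $\binom{K_{n,r}}{1} = K_{n,r}$, while in every term with $\alpha_n^\rho = 0$ the factor for $m = n$ equals $\binom{K_{n,r}-1}{0} = 1$ and may be dropped. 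Splitting off the connected term from the display therefore gives
$$
  H_{n,r} = K_{n,r} + \sum_{\stackrel{\rho \in \Pi_n}{\alpha_n^\rho = 0}} \prod_{m=1}^{n-1} \binom{K_{m,r} + \alpha_m^\rho - 1}{\alpha_m^\rho},
$$
and rearranging is exactly the claimed recursion. The one delicate step is the isomorphism bookkeeping of the second paragraph, namely verifying that the multiset of component isomorphism types is a complete invariant for isomorphism of disconnected networks; everything else reduces to the standard stars-and-bars count.
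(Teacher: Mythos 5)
Your proof is correct and follows essentially the same route as the paper: decompose a network into its weakly connected components (each of which inherits constant in-degree $r$), observe that an isomorphism class of disconnected networks corresponds to a partition $\rho$ of $n$ with $\alpha_n^\rho = 0$ together with an unordered multiset of connected components counted by $\binom{K_{m,r}+\alpha_m^\rho-1}{\alpha_m^\rho}$, and subtract from $H_{n,r}$. The only difference is that you make explicit two facts the paper leaves implicit --- that components of a degree $r$ network are themselves degree $r$, and that the multiset of component isomorphism types is a complete invariant --- which is a sound filling-in of detail, not a different argument.
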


We use this theorem to generate Table \ref{tab:Knr}.

\begin{table}[h]
  \centering
  \begin{tabular}{rr|rrrrrr}
&&\multicolumn{6}{c}{r}\\
  && 1 & 2 & 3 & 4 & 5 & 6 \\\hline
 &1 & 1 & 1 & 1 & 1 & 1 & 1 \\
 &2 & 2 & 5 & 9 & 14 & 20 & 27 \\
 n&3 & 4 & 38 & 170 & 575 & 1561 & 3696 \\
 &4 & 9 & 416 & 6690 & 62725 & 410438 & 2076725 \\
 &5 & 20 & 6209 & 436277 & 14000798 & 264632734 & 3403484793 \\
 &6 & 51 & 117020 & 42722972 & 5554560632 & 355631996061 & 13505066262007
\end{tabular}
  \caption{The number of connected $n$ cell degree $r$ networks
    counted up to isomorphism, $K_{n,r}$}
  \label{tab:Knr}
\end{table}

Now we will use the work of Section 3 to give a recursive formula for
enumerating the connected minimal coupled $n$ cell degree $r$
networks. 

\begin{thm}
  Let $M_{n,r}$ denote the number of minimal connected $n$ cell degree
  $r$ networks. For $n \geq 2$ we have $M_{n,1}= K_{n,1}$ and
  \begin{equation*}
    M_{n,r} = K_{n,r} - \sum_{s=1}^{r-1} \biggl\lfloor
    \frac{r}{s} \biggr\rfloor M_{n,s}.
  \end{equation*}
  For $n=1$ note that $M_{n,r} = 0$. 
\end{thm}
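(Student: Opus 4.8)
The plan is to prove the single master identity
\begin{equation*}
  K_{n,r} = \sum_{s=1}^{r} \left\lfloor \frac{r}{s} \right\rfloor M_{n,s},
\end{equation*}
and then to isolate the $s=r$ term. Since $\lfloor r/r\rfloor = 1$, that term is exactly $M_{n,r}$, and transposing the remaining terms yields the stated recursion. The degenerate cases then follow immediately: $M_{n,1}=K_{n,1}$ comes from the empty sum, and $M_{1,r}=0$ because the sole one-vertex network of degree $r$ carries $r>0$ loops and so can never be reduced.

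To prove the identity I would partition the connected degree $r$ networks according to their reduced form. By Lemma \ref{lem:Reduced} every such network is O.D.E. equivalent to a reduced network, and by the preceding uniqueness lemma this reduced network is unique up to isomorphism; it is a connected minimal network of some degree $s$ with $1 \le s \le r$. This gives a well-defined map, on isomorphism classes, from connected degree $r$ networks to connected minimal networks, and $K_{n,r}$ is the total number of points lying over all minimal networks. It therefore suffices to show that the fiber over a fixed minimal connected network $G_M$ of degree $s$ contains exactly $\lfloor r/s\rfloor$ isomorphism classes.

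The core step is to describe this fiber explicitly. Writing $A_M$ for the adjacency matrix of $G_M$, linear equivalence shows that any degree $r$ network in the fiber has adjacency matrix of the form $\alpha\,\Id + \beta A'$, where $A'$ is a conjugate of $A_M$. The decisive point is a rigidity argument: because $G_M$ has a vertex with no loops, evaluating the diagonal there forces $\alpha$ to be an integer, whence $\beta$ must clear every entry of $A_M$ to an integer; since those entries have greatest common divisor $1$, $\beta$ is itself an integer, and positivity of the off-diagonal entries forces $\beta = k \ge 1$ and $\alpha = t \ge 0$. The row-sum condition then reads $ks + t = r$. In other words, the fiber consists precisely of the networks $kA_M + t\,\Id$ obtained from $G_M$ by $k$-splitting its edges and adjoining $t$ loops, exactly the operations of Section \ref{sec:netw-oper-that}.

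It remains to count the solutions and confirm they are distinct. The constraints $k\ge 1$, $t\ge 0$, $ks+t=r$ have exactly $\lfloor r/s\rfloor$ solutions, one for each $k$ from $1$ to $\lfloor r/s\rfloor$. Distinct pairs give non-isomorphic networks: the greatest common divisor of the off-diagonal entries is invariant under permutation conjugation, so comparing the off-diagonal gcds of $kA_M + t\,\Id$ and $k'A' + t'\,\Id$ pins down $k=k'$, after which the common row sum pins down $t=t'$; moreover each $kA_M + t\,\Id$ is connected, since splitting edges and adding loops does not alter the support of the off-diagonal entries. Thus the fiber has exactly $\lfloor r/s\rfloor$ isomorphism classes and the master identity follows. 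I expect the rigidity argument of the third paragraph — that $\beta$ must be a positive integer, which is precisely where both defining properties of a reduced network are used — to be the main obstacle, together with the care needed to make the fiber count independent of the automorphisms of $G_M$.
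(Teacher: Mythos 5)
Your proof is correct, and at the top level it uses the same decomposition as the paper: partition the connected $n$ cell degree $r$ networks according to their unique minimal (reduced) representative, show the fiber over a fixed minimal connected degree $s$ network $G_M$ contains exactly $\lfloor r/s \rfloor$ isomorphism classes, and rearrange the resulting identity. Where you genuinely diverge is in how the fiber is identified. The paper argues operationally: Lemma \ref{lem:Reduced} and the uniqueness lemma show every network in the fiber arises from $G_M$ by some combination of loop-adjunctions $A$ and edge-splittings $T_k$, and the relations $T_k \circ T_l = T_{kl}$ and $T_k \circ A = A^k \circ T_k$ collapse any such combination to the normal form ``one $k$-splitting, then $t$ loops,'' with $ks + t = r$ giving $\lfloor r/s \rfloor$ choices of $k$. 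You instead rerun the linear-algebra rigidity from the proof of the uniqueness lemma: any adjacency matrix in the fiber lies in $\mathrm{span}\{\Id, A'\}$ with $A'$ a permutation conjugate of $A_M$, and the loop-free vertex plus the gcd-one condition of a reduced network force the coefficients to be integers $t \geq 0$ and $k \geq 1$, recovering $k A_M + t\,\Id$ directly. Both routes are sound, and yours buys two things the paper leaves implicit but that the exactness of the count $K_{n,r} = \sum_{s=1}^{r} \lfloor r/s \rfloor M_{n,s}$ actually requires: a verification that distinct pairs $(k,t)$ yield non-isomorphic networks (via the conjugation-invariance of the off-diagonal gcd, then the row sum), and that the fiber consists of connected networks so that the bookkeeping stays within the $K_{n,r}$ count. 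The paper's operational route, by contrast, makes the appearance of $\lfloor r/s \rfloor$ transparent without reintroducing matrices. One small patch for your rigidity step: ``positivity of the off-diagonal entries'' should be stated as using connectedness of the degree $r$ network in the fiber --- if $\beta = 0$ its matrix would be $\alpha\,\Id$, which is disconnected for $n \geq 2$ --- after which $\beta \geq 1$ follows from nonnegativity of the entries; this is a one-line fix.
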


\textbf{Proof:} If a connected $n$ cell degree $r$ network is not
minimal then it is O.D.E. equivalent to a minimal $n$ cell $s$ degree
network where $s < n$. Given a minimal $n$ cell degree $s$ network $G$
the question thus becomes how many non-isomorphic $n$ cell degree $r$
networks can be obtained that are O.D.E. equivalent to $G$. We have
seen that any network $G'$ O.D.E. equivalent to a minimal network $G$
may be obtained from $G$ by a combination of adjoining loops and
splitting edges (and an isomorphism which we may ignore).  Let $A$ be
the operation of adjoining a root and $T_k$ the operation of
$k$-splitting the edge. Clearly we have $T_k \circ T_l = T_{k
  l}$. There is a commutation relation between $T_k$ and $A$, $T_k
\circ A = A^k \circ T_k$. Using this commutation relation we see that
any combination of adjoining loops and edge splitting can be reduced
to a single $k$-splitting for some $k \geq 1$ followed by adjoining
some number of loops. Given that $G$ has degree $s$ and $G'$ has
degree $r$ the possible choices of $k$ are constrained by $k s \leq
r$.  Thus there are $\lfloor r/s \rfloor$ possible values of
$k$. We then adjoin sufficiently many loops to bring the degree to
$r$.

The number of connected minimal $n$ cell degree $r$ networks is thus
given by
\begin{equation*}
  M_{n,r} = K_{n,r} - \sum_{s=1}^{r-1} \biggl\lfloor
\frac{r}{s} \biggr\rfloor M_{n,s}
\end{equation*}
with the initial condition that $M_{n,1}= K_{n,1}$ for $n\geq2$.  \qed

Using this theorem we generate Table \ref{tab:Mnr}.

\begin{table}[h]
  \centering
 \begin{tabular}{rr|rrrrrr}
&&\multicolumn{6}{c}{$r$}\\
  && 1 & 2 & 3 & 4 & 5 & 6 \\ \hline
 &1 & 0 & 0 & 0 & 0 & 0 & 0 \\
 &2 & 2 & 1 & 2 & 2 & 4 & 2 \\
 $n$&3 & 4 & 30 & 128 & 371 & 982 & 1973 \\
 &4 & 9 & 398 & 6265 & 55628 & 347704 & 1659615 \\
 &5 & 20 & 6169 & 430048 & 13558332 & 250631916 & 3138415822 \\
 &6 & 51 & 116918 & 42605901 & 5511720691 & 350077435378 & 13149391543076
\end{tabular}

  \caption{The number of minimal connected $n$ cell degree $r$ networks counted up to isomorphism, $M_{n,r}$. }
  \label{tab:Mnr}
\end{table}

It is interesting to note that the number of connected minimal $2$ cell degree
$r$ networks for $r\geq 2$ is given by $\phi(r)$ where $\phi$ is the
Euler totient function.  The appearance of the Euler Totient is
explained by the following network diagram:
\begin{figure}[h]
  \centering
  \includegraphics[scale = .7]{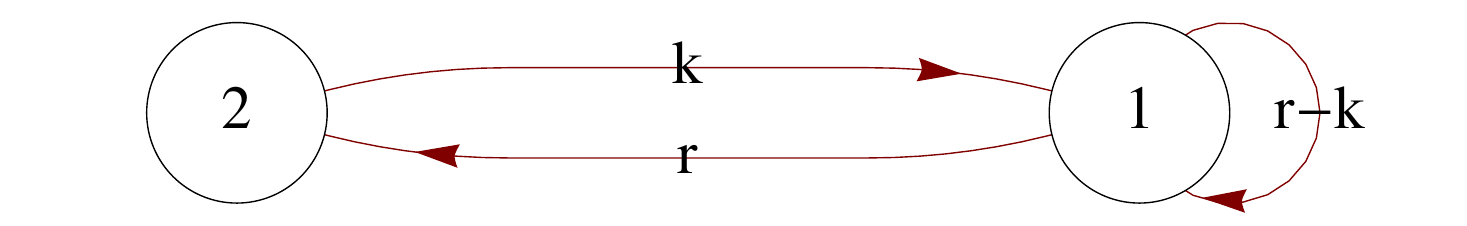}

  \caption{ A connected 2 cell degree $r$ network. Edge labels
    represent edge multiplicities.}
  
\end{figure}

In order for a 2 cell network to be minimal at least one vertex must
have no loops. Without losing generality we may suppose that vertex 2
has no loops. Thus vertex 2 must receive $r$ inputs from vertex 1. If
we let $k$, with $k \leq r$, denote the number of edges from vertex 2
to vertex 1 then we see that vertex 1 must have $r-k$ loops. If this
network is to be minimal then the three edge multiplicities, $r$, $k$,
and $r-k$, must be relatively prime. This occurs if and only if $r$
and $k$ are relatively prime. For a fixed $r$ the number of $1 \leq k
\leq r$ for which $r$ and $k$ are relatively prime is
$\phi(r)$. Provided that $r \geq 2$ we may exclude $k=0$ since then
$r-k=r$ and all edge multiplicities have divisor $r$ and hence the
network is not minimal. 

If $r=1$ then there are in fact two minimal 2 cell degree 1
networks. 
\begin{figure}[h]
  \centering
  \includegraphics[trim = 0in 0.8in 0in 0.8in]{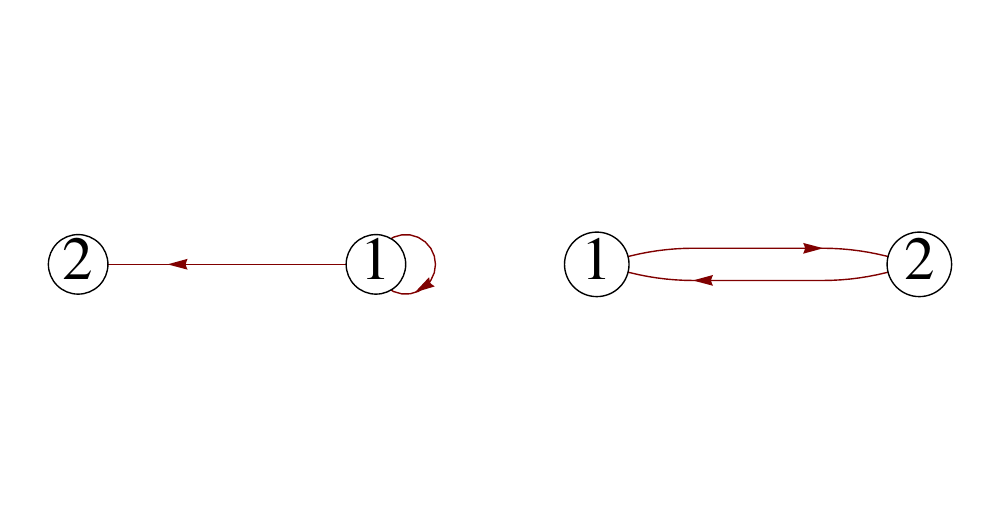}
  \caption{The two minimal 2 cell degree 1 networks. }
  \label{h}
\end{figure}

\bibliographystyle{plain}

\bibliography{Enumeration}

\end{document}